\allowdisplaybreaks \setlength{\textwidth}{6.5in}
\numberwithin{equation}{section}
\newtheorem{thm}{Theorem}[section]
\newtheorem{lem}[thm]{Lemma}
\newtheorem{prop}[thm]{Proposition}
\newtheorem{cor}[thm]{Corollary}
\theoremstyle{definition}
\newtheorem{defn}[thm]{Definition}
\theoremstyle{remark}
\newtheorem{rem}[thm]{Remark}
\newcommand{\bC}{\boldsymbol{C}}
\newcommand{\bpi}{\boldsymbol{\pi}}
\newcommand{\bxi}{\boldsymbol{\xi}}
\newcommand{\bchi}{\boldsymbol{\chi}}
\newcommand{\bzeta}{\boldsymbol{\zeta}}
\newcommand{\brho}{\boldsymbol{\rho}}
\begin{document}
\bibliographystyle{plainnat}

\title{Strong Laws for Balanced Triangular Urns}

\author[A. Bose]{Arup Bose}
\address{Stat-Math Unit\\ Indian Statistical Institute\\ 203 B. T. Road\\ Kolkata 700108\\ India}
\email{abose@isical.ac.in}

\author[A. Dasgupta]{Amites Dasgupta}
\address{Stat-Math Unit\\ Indian Statistical Institute\\ 203 B. T. Road\\ Kolkata 700108\\ India}
\email{amites@isical.ac.in}

\author[K. Maulik]{Krishanu Maulik}
\address{Stat-Math Unit\\ Indian Statistical Institute\\ 203 B. T. Road\\ Kolkata 700108\\ India}
\email{krishanu@isical.ac.in}

\begin{abstract}
Consider an urn model whose replacement matrix is triangular, has all entries nonnegative and the row sums are all equal to one. We obtain the strong laws for the counts of balls corresponding to each color. The scalings for these laws depend on the diagonal elements of a rearranged replacement matrix. We use the strong laws obtained to study further behavior of certain three color urn models.
\end{abstract}

\keywords{Urn model, balanced triangular replacement matrix.}

\subjclass[2000]{Primary 60G70, 60F05; Secondary 60F10.}
\maketitle

\begin{section}{Introduction}
Consider an urn with balls of $(K+1)$ colors. Initially the counts of balls of each color are non-random, strictly positive real numbers and the total count of balls in the urn equals one. Let the row vector $\bC_0$ denote the initial count of balls of each color. The composition of the urn evolves by adding balls of different colors at times $n=1, 2, 3, \ldots$ as follows.

Suppose $R = ((r_{ij}))$ is a $(K+1) \times (K+1)$ non-random balanced (that is, each row sum is same and hence, without loss of generality, one) replacement matrix with nonnegative entries. Let $\bC_n$ denote the row vector of the counts of balls of each color after the $n$-th trial, $n= 1, 2, \ldots$. At the $n$-th trial, a ball is drawn at random from the urn with the current composition $\bC_{n-1}$, so that the $i$-th color appears with probability $\bC_{n-1, i}/n$, $ i=1, \ldots, (K+1)$.  If the $i$-th color appears, then, for $j=1, \ldots, (K+1)$, $r_{ij}$ balls of $j$-th color are added to the urn before the next draw, together with the drawn ball. It is of interest to study the stochastic behavior of $\bC_n$ as $n \to \infty$.

In case $R$ is irreducible, let $\bpi_R$ be the unique stationary distribution satisfying $\bpi_R R = \bpi_R.$ Then \citep[see, for example,][]{gouet:1997}
$\bC_n / (n+1) \to \bpi_R\  \text{almost surely}.$ Note that $\bpi_R$ is also a left eigenvector of $R$ corresponding to the eigenvalue $1$. However, when $R$ is not irreducible or balanced, the balls of different colors may increase at different rates and strong or weak limits for $\bC_n$ are not known in full generality.

\cite{janson:2006} considered two color triangular urn models, where the replacement matrix was not necessarily balanced, and identified the weak limits of $\bC_n$ in all possible cases. He mentioned urns with more colors and triangular replacement matrices as possible objects of further study \citep[cf.][Problem 1.16]{janson:2006}. \cite{flajolet:dumas:puyhaubert:2006} considered a three color urn having balanced triangular replacement matrix $R$ with further conditions on the entries and obtained weak limit theorems.

Motivated by these results, we consider balanced, triangular urns with arbitrary (but finite) number of colors. This assumption of balancedness on $R$ allows us convenient application of martingale techniques. In contrast, \cite{janson:2006} used the theory of branching processes and
\cite{flajolet:dumas:puyhaubert:2006} used generating functions. However, the application of martingale techniques to the study of urn models is not new, see for example, \cite{gouet:1997, bai:hu:1999}.

With appropriate scalings, we establish almost sure convergence of each color count to non zero limits. Under an additional assumption, see~\eqref{eq: unique arrangement}, the limits are expressed in terms of the limits of certain martingales and left eigenvectors of appropriate submatrices of $R$. These strong laws for urn models with arbitrary but finite number of colors and balanced triangular replacement matrices are the main contributions of this article.

The outline of the rest of the paper is as follows. Section~\ref{sec: notation} first describes a rearrangement of colors which converts any triangular balanced replacement matrix to an appropriate standard form. Our results are better described with reference to this standard form. Of course the convergence holds without assuming the standard form but the results are cumbersome to state, see Remark~\ref{rem: alternate}. We also state the additional assumption~\eqref{eq: unique arrangement} required to identify the limits in somewhat explicit forms. This section also establishes the notation to describe the limits and states some auxiliary results we need.

In Section~\ref{sec: result}, we state and prove the main theorem. In case of a color whose corresponding diagonal entry is larger than all the preceding ones, we consider the right eigenvector of $R$ corresponding to this eigenvalue and normalize the corresponding linear combination to obtain a martingale. This martingale turns out to be $L^2$-bounded and hence converges almost surely. The convergence of the individual color count then follows, since earlier colors have lower rates. For colors whose corresponding diagonal entry is not larger than the previous ones, we first show that the appropriately scaled color count is $L^1$-bounded. Then we form  the appropriate martingale and obtain the convergence. In Section~\ref{sec: three col}, we analyze the three color urn model with triangular replacement matrix as a corollary and obtain the asymptotic behaviors of linear combinations of color counts. This gives an indication of further results that can be proved using the strong laws of this article.
\end{section}

\begin{section}{Notation and Preliminary Results} \label{sec: notation}
Suppose $R$ is a balanced triangular replacement matrix with row
sums one. Denote the diagonal elements of $R$ as $r_k$, $1\leq k\leq K+1$. Let $1 = i_1 < i_2 < \cdots < i_J < i_{(J+1)} ( =K+1 )$ denote the indices of the
running maxima of the diagonals, namely, $r_1 = r_{i_1} \leq r_{i_2} \leq \cdots \leq r_{i_J} \leq r_{i_{(J+1)}} = r_{K+1}$ and for $i_j < k < i_{(j+1)}$, we have $r_k < r_{i_j}$ for $j=1, 2, \ldots, J$.

\begin{rem} \label{rem: block}
Since the row sums are $1$ and the elements of $R$ are nonnegative, all the diagonal elements will be less than or equal to $1$. Thus, $(K+1)$ will always be an index of the running maximum of the diagonals.
\end{rem}

The running maxima of $R$ also lead to the following concepts:
\begin{defn}
Suppose $R$ is a balanced triangular replacement matrix. For $j=1, 2, \ldots, J$, the colors indexed by $i_j, i_j+1, \ldots, i_{(j+1)} -1$ constitute the $j$-th \textit{block} of colors, $i_j$ is called its \textit{leading index}, and the corresponding color is called the \textit{leading color} of the $j$-th block. The color indexed by $(K+1)$ will be the leading color and the sole constituent of the $(J+1)$-st block.
\end{defn}

The triangular replacement matrix $R$ with the indices of the running maxima of the diagonals can be visualized as
\begin{equation*}
R = \left(
\begin{array}{cccccccccccccc}
r_1 &r_{12} &\cdots &\cdots &\cdots &\cdots &\cdots &\cdots &\cdots &\cdots &\cdots &\cdots &\cdots &\cdots \\
&r_2 &\cdots &\cdots &\cdots &\cdots &\cdots &\cdots &\cdots &\cdots &\cdots &\cdots &\cdots &\cdots \\
&&\ddots &\cdots &\cdots &\cdots &\cdots &\cdots &\cdots &\cdots &\cdots &\cdots &\cdots &\cdots \\
&&&r_{i_2-1} &\cdots &\cdots &\cdots &\cdots &\cdots &\cdots &\cdots &\cdots &\cdots &\cdots \\
&&&&r_{i_2} &\cdots &\cdots &\cdots &\cdots &\cdots &\cdots &\cdots &\cdots &\cdots \\
&&&&&\ddots &\cdots &\cdots &\cdots &\cdots &\cdots &\cdots &\cdots &\cdots \\
&&&&&&r_{i_j} &\cdots &r_{i_j, k} &\cdots &r_{i_j, i_{(j+1)} - 1} &r_{i_j, i_{(j+1)}} &\cdots &\cdots \\
&&&&&&&\ddots &\cdots &\cdots &\cdots &\cdots &\cdots &\cdots \\
&&&&&&&&r_k &\cdots &r_{k, i_{(j+1)} - 1} &r_{k, i_{(j+1)}} &\cdots &\cdots \\
&&&&&&&&&\ddots &\cdots &\cdots &\cdots &\cdots \\
&&&&&&&&&&r_{i_{(j+1)}-1} &r_{i_{(j+1)}-1, i_{(j+1)}} &\cdots &\cdots \\
&&&&&&&&&&&r_{i_{(j+1)}} &\cdots &\cdots \\
&&&&&&&&&&&&\ddots &\cdots \\
&&&&&&&&&&&&&r_{i_{(J+1)}}
\end{array}
\right)
\end{equation*}
Here $r_1 = r_{i_1} \leq r_{i_2} \leq \cdots \leq r_{i_j} \leq
r_{i_{(j+1)}} = \leq \cdots \leq r_{i_{(J+1)}}$ give the running
maxima of the diagonal entries. It will be helpful to study the concepts of rearrangement and blocks, while keeping this visualization in mind.

To study urn models with triangular replacement matrices, we need to arrange the colors systematically, which we describe next. This particular rearrangement keeps the replacement matrix triangular. The new replacement matrix is obtained  by pre- and post-multiplication of $R$ by permutation matrices. Thus, it remains balanced with row sum $1$ and has the same set of
eigenvalues. The elements of the new eigenvectors are also suitable
rearrangements of the original ones.

\begin{defn}
The colors are said to be arranged in the \textit{increasing} order if $R$ satisfies the following: with $1 = i_1 < i_2 < \cdots < i_J < i_{(J+1)} ( =K+1 )$ as the indices of the running maxima of the diagonals, for $i_j < k < i_{(j+1)}$, $j=1, 2, \ldots, J$, we have
\begin{equation} \label{eq: arrangement}
\sum_{m=i_j}^{k-1} r_{mk} > 0.
\end{equation}
\end{defn}

It is easy to see that,~\eqref{eq: arrangement} is equivalent to the fact that, for any non-leading color with index $k$ in $j$-th block, namely, for $i_j < k < i_{(j+1)}$, with $j=1, 2, \ldots, J$, the part of $k$-th column in the $j$-th block has at least one non-zero entry. Also note that Condition~\eqref{eq: arrangement} holds only for non-leading colors.

The next proposition shows that any urn model with triangular replacement matrix can be transformed into another urn model with triangular replacement matrix such that the colors are in increasing order.

\begin{prop} \label{prop: arrangement}
Suppose $R$ is a balanced triangular replacement matrix with row sums one. Then there exists a rearrangement of the colors in the increasing order, such that the replacement matrix remains triangular.
\end{prop}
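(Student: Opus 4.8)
The plan is to recast triangularity in terms of linear extensions of a directed acyclic graph, and then to exhibit the desired rearrangement by a greedy construction. Encode the off-diagonal structure of $R$ by the directed graph $G$ on the colors $\{1,\dots,K+1\}$ that has an arc $a\to b$ exactly when $a\ne b$ and $r_{ab}>0$; since $R$ is triangular, every arc runs from a smaller index to a larger one, so $G$ is acyclic. A rearrangement of colors is a permutation $\sigma$; the rearranged matrix $\widetilde R=((\widetilde r_{ij}))$ with $\widetilde r_{ij}=r_{\sigma(i)\sigma(j)}$ is again triangular precisely when the new linear order of the colors reverses no arc of $G$, i.e.\ is a linear extension of $G$. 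So it suffices to produce a linear extension under which, reading the diagonal entries in the new order, condition \eqref{eq: arrangement} holds for every color that is not a running maximum of the diagonals.

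I would construct the order greedily, from first to last: at each stage, among the colors all of whose $G$-in-neighbours have already been placed --- equivalently, the sources of the induced subgraph on the colors not yet placed --- append one whose diagonal entry is smallest (ties broken arbitrarily). Since every finite nonempty DAG has a source, the procedure runs to completion, and since no arc is ever reversed, the resulting $\widetilde R$ is triangular. What remains is to verify \eqref{eq: arrangement} for this order.

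This I would do in two steps. First, if a color $v$ has no $G$-in-neighbour it is available from the outset, so by the greedy rule every color placed before it has diagonal at most $r_v$; hence $v$ is a running maximum (a leading color) and \eqref{eq: arrangement} does not concern it. Consequently every non-leading color $v$ has at least one in-neighbour; let $u$ be the one placed latest, so $u$ precedes $v$. Second, I claim that no running maximum occurs strictly between $u$ and $v$ in the new order. Indeed, if some running maximum $\rho$ did, then at the moment $\rho$ was placed all in-neighbours of $v$ were already down (they were placed no later than $u$, hence before $\rho$), so $v$ was available and the greedy rule gives $r_\rho\le r_v$; as $\rho$ is a running maximum, every color before $\rho$ has diagonal at most $r_\rho\le r_v$, and the same availability argument shows every color placed between $\rho$ and $v$ also has diagonal at most $r_v$ --- so every color up to and including $v$ has diagonal at most $r_v$, forcing $v$ itself to be a running maximum, a contradiction. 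Therefore the leading color of $v$'s block lies weakly before $u$, so $u$ lies in $v$'s block; since $r_{uv}>0$ and all entries are nonnegative, the term indexed by $u$ makes the sum in \eqref{eq: arrangement} strictly positive. This proves the condition for every non-leading color and finishes the argument.

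The step I expect to be the real obstacle is the second one --- showing that an intervening running maximum would force $v$ itself to be a running maximum; the graph reformulation, the termination of the greedy procedure, and matching ``records of the reordered diagonal sequence'' with ``leading colors of blocks'' should all be routine once the setup is fixed.
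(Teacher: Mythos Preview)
Your argument is correct and gives a clean alternative to the paper's proof. Both proofs are constructive, but the constructions run in opposite directions and rest on different organizing ideas.

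The paper proceeds \emph{backward}, block by block. Starting from the last color (which is always a running maximum), it repeatedly selects, among the colors not yet assigned to a block, one with the largest diagonal entry as the new leading color; then, moving \emph{forward} through the remaining intermediate colors, it either appends a color to the current block (when \eqref{eq: arrangement} already holds) or reshuffles it to just before the current leading color (when the relevant column is zero on the block so far, which guarantees the reshuffle preserves triangularity). The blocks and their leading colors are thus produced explicitly by the procedure.

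You instead recast the problem as choosing a linear extension of the DAG of positive off-diagonal entries, and build the order \emph{forward} by a single greedy rule: among currently available sources, place one with smallest diagonal. The blocks are then read off a posteriori as the record positions of the resulting diagonal sequence, and \eqref{eq: arrangement} is verified by showing that the latest-placed in-neighbour of any non-leading color must land in that color's block. The delicate step you flagged---that an intervening record would force the color itself to be a record---is exactly the heart of the argument, and your proof of it is sound.

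What the two approaches buy: the paper's construction makes the block decomposition visible from the outset and shows directly why reshuffling preserves triangularity, at the cost of a nested (backward-over-blocks, forward-within-block) induction. Your approach is shorter and more conceptual---one global greedy pass plus a single structural lemma---and the DAG/linear-extension language makes the triangularity constraint transparent. Either way one obtains only existence; uniqueness requires the additional hypothesis \eqref{eq: unique arrangement}, which neither construction addresses (nor needs to for this proposition).
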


\begin{proof}
From Remark~\ref{rem: block}, we have that the $(K+1)$-st color forms the last block as required. We shall now construct the other blocks inductively going backward. Within a block, the construction will move forward.

Suppose we have constructed some blocks. If the leading index of the last constructed block, say $i$, is 1, we are done.

If $i > 1$, we construct the next (previous) block as follows. Let $k <i$ be such that $r_k= \max\{r_m: \ m<i\}$. The color with index $k$ is declared to be the leading color of the present block under construction and the index of this color may change, as discussed later, through rearrangement.

By our choice of leading colors, the diagonal entries of the indices of the leading colors will be in nondecreasing order, as required.

Next we decide which of the intermediate colors with index $m$, $k<m<i$ will be in the present block. This will be done through a process of rearrangement described inductively going forward.

Suppose $l$ colors, including the leading color, satisfying~\eqref{eq: arrangement}, have been obtained through rearrangement for the present block and the index of the leading color has changed to $k' (>k)$ after this rearrangement. Then the index of the last considered color was $k'+l-1$. If $k'+l=i$, we have considered all intermediate colors and the construction of the block is over.

If $k'+l <i$, consider the color with index $k'+l$. By choice of the leading color of the present block, we must have $r_{k'} > r_{k'+l}$. If we have $\sum_{m=k'}^{k'+l-1} r_{m, (k'+l)} > 0$, we take the color with index $k'+l$ as the $(l+1)$-st color of the present block.

Otherwise, $r_{m, (k'+l)} = 0$ for $m = k', k'+1, \ldots, k'+l-1$. In that case, we reshuffle the colors to bring $(k'+l)$-th color ahead of the $k'$-th one, and then $r_{m, (k'+l)}$, $m = k', k'+1, \ldots, k'+l-1$ will be the only entries which will move below the diagonal of the $k^\prime$-th column in the reshuffled replacement matrix. Hence the reshuffled replacement matrix will remain triangular. After reshuffle, this color will have index $k'$ and the index of the colors already in the present block will increase by $1$, with the present leading index increasing to $k'+1$. The number of colors in the present block will remain at $l$. This gives the forward induction step for constructing a block. Since a color is shuffled up if it fails~\eqref{eq: arrangement}, all the remaining colors will satisfy this condition. Thus we complete the backward induction step for rearrangement of blocks.
\end{proof}

In view of the above proposition, we shall always assume, unless otherwise
mentioned, that the colors are indeed in increasing order.

Note that if $r_{i_j} = r_{i_{j+1}}$ and $r_{m, i_{(j+1)}} = 0$ for all $m = i_j, i_j+1, \ldots, i_{(j+1)} -1$, then we can reshuffle the colors to bring the $i_{(j+1)}$-th color ahead of the $i_j$-th one, yet maintaining the triangular structure of the replacement matrix and the increasing order of the colors. Hence the rearrangement of the colors in the increasing order will not be unique. To make the above rearrangement of the colors to the increasing order a unique one, we further assume that
\begin{equation} \label{eq: unique arrangement}
\sum_{m=i_{j}}^{i_{(j+1)}-1} r_{m,i_{(j+1)}} > 0, \text{ whenever $r_{i_{j}} = r_{i_{(j+1)}}.$}
\end{equation}
Assumption~\eqref{eq: unique arrangement} is equivalent to requiring at least one non-zero entry in the $i_{(j+1)}$-th column in the part corresponding to the $j$-th block. Its significance has been discussed later in Remark~\ref{rem: nonzero}.

We define the following submatrices and vectors corresponding to different blocks of colors.
\begin{defn}
Let $R^{(j)}$ be the submatrix formed by the rows and columns corresponding to the indices of the $j$-th block. We shall also write $\lambda_j = r_{i_j} = r^{(j)}_1$. The part of the vector $\bC_n$ corresponding to the $j$-th block will be denoted by $\bC^{(j)}_n$.  Finally, $\brho^{(j)}$ will denote the part of $i_{j+1}$-th column corresponding to the $j$-th block.
\end{defn}

By the definition of a block, $r_{i_j}= \lambda_j$ is the strictly largest
eigenvalue of $R^{(j)}$ and has multiplicity $1$. Let $\bpi^{(j)}$ be the unique left eigenvector of $R^{(j)}$ corresponding to the eigenvalue $\lambda_j$ normalized so that its first element is $1$. Then $\bpi^{(j)}$ satisfies
$$\bpi^{(j)}R^{(j)}=\lambda_j \bpi^{(j)}, \ \ \bpi^{(j)}_{1}=1.$$

Observe that if $i_j \leq m, k < i_{(j+1)}$, then $\bC_{nm} = \bC^{(j)}_{n, (m-i_j+1)}$ and $r_{mk} = r^{(j)}_{(m-i_j+1), (k-i_j+1)}$.

Next we define an index to count the number of times the diagonal entry corresponding to a leading color has occurred before. This is important in obtaining the rate of color count in Theorem~\ref{thm: main}.
\begin{defn}
For the $j$-th block with leading color index $i_j$, let
\begin{equation*}
\nu_j=\#\{m: \ r_m=\lambda_j, \ m<i_j\}.
\end{equation*}
\end{defn}
Observe that, if $r_m = \lambda_j$, for some $m<i_j$, then $m$ is a leading index as well. So, if it is the first time a diagonal has value $\lambda_j$, we have $\nu_{j} = 0$. Also note that $\lambda_{j-1} = \lambda_j$ if and only if $\nu_j > 0$, and in that case $\nu_{j-1} = \nu_j - 1$ holds.

The following useful result is obtained as a consequence of the above definitions.
\begin{lem} \label{lem: matrix}
If the colors are in increasing order and the replacement matrix $R$ is triangular, then the vector $\bpi^{(j)}$ has all coordinates positive.
\end{lem}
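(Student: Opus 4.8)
We want to show the left eigenvector $\bpi^{(j)}$ of $R^{(j)}$ (normalized so $\bpi^{(j)}_1 = 1$) has all positive entries. The plan is to analyze the linear system $\bpi^{(j)} R^{(j)} = \lambda_j \bpi^{(j)}$ column by column, exploiting the triangular structure of $R^{(j)}$ together with the increasing-order condition \eqref{eq: arrangement}. Since $R^{(j)}$ is upper triangular with diagonal entries $r^{(j)}_1 = \lambda_j > r^{(j)}_2, \ldots, r^{(j)}_{s}$ (where $s = i_{(j+1)} - i_j$ is the block size), writing out the $k$-th coordinate of $\bpi^{(j)} R^{(j)} = \lambda_j \bpi^{(j)}$ gives
\[
\sum_{m=1}^{k} \bpi^{(j)}_m r^{(j)}_{mk} = \lambda_j \bpi^{(j)}_k,
\]
i.e. $\bigl(\lambda_j - r^{(j)}_{kk}\bigr)\bpi^{(j)}_k = \sum_{m=1}^{k-1} \bpi^{(j)}_m r^{(j)}_{mk}$.

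First I would establish positivity by (strong) induction on $k$. The base case $k=1$ is the normalization $\bpi^{(j)}_1 = 1 > 0$. For the inductive step with $2 \le k \le s$, note that $k$ corresponds to a non-leading color of the $j$-th block, so $r^{(j)}_{kk} = r_{i_j + k - 1} < \lambda_j$ by the definition of a block; hence the coefficient $\lambda_j - r^{(j)}_{kk}$ is strictly positive. By the induction hypothesis, $\bpi^{(j)}_m > 0$ for all $m < k$, and all entries $r^{(j)}_{mk}$ are nonnegative, so the right-hand side $\sum_{m=1}^{k-1}\bpi^{(j)}_m r^{(j)}_{mk}$ is nonnegative. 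To get \emph{strict} positivity of $\bpi^{(j)}_k$, I need at least one term in this sum to be strictly positive, which is exactly guaranteed by the increasing-order condition \eqref{eq: arrangement}: the part of the $k$-th column inside the $j$-th block, namely $(r^{(j)}_{1k}, \ldots, r^{(j)}_{(k-1)k})$, has at least one nonzero (hence strictly positive) entry. Combined with the strictly positive, already-established $\bpi^{(j)}_m$'s, this forces $\bpi^{(j)}_k = \bigl(\lambda_j - r^{(j)}_{kk}\bigr)^{-1}\sum_{m=1}^{k-1}\bpi^{(j)}_m r^{(j)}_{mk} > 0$, completing the induction.

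One should also confirm that such a left eigenvector exists and is unique up to scaling, so that the normalization $\bpi^{(j)}_1 = 1$ is legitimate and, in particular, that $\bpi^{(j)}_1 \ne 0$. This is already asserted in the text preceding the lemma: $\lambda_j$ is the strictly largest eigenvalue of $R^{(j)}$ with algebraic multiplicity one, so the left eigenspace is one-dimensional; and the induction above, run with $\bpi^{(j)}_1$ left as a free parameter, shows every left eigenvector is a scalar multiple of one with nonzero first coordinate (if $\bpi^{(j)}_1 = 0$ the recursion would force $\bpi^{(j)} = \bzero$), so the normalization is valid. The only genuinely delicate point is the strictness in the inductive step, and that is precisely where \eqref{eq: arrangement} is used — without the increasing-order arrangement the column could be identically zero within the block and the corresponding coordinate of $\bpi^{(j)}$ would vanish. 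No further estimates are needed; the argument is a finite triangular back-substitution.
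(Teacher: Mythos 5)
Your proof is correct and follows essentially the same route as the paper: strong induction on the coordinates, with $\lambda_j - r^{(j)}_{kk} > 0$ because $\lambda_j$ is the strictly largest diagonal entry of the block, and strict positivity of the numerator coming from condition~\eqref{eq: arrangement} together with the induction hypothesis. The paper additionally notes the degenerate case $r_{i_j}=0$ (where the block has size one and there is nothing to prove), but your argument covers that implicitly since the inductive step is then vacuous.
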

\begin{proof}
We prove this by induction on the coordinates of the vector.

Observe that if $r_{i_j}=0$, then the $j$-th block has only one color, namely, the $i_j$-th one. By the choice of normalization, $\bpi^{(j)}_{1} = 1$. If $r_{i_j}=0$, then by the above observation, the proof is complete. So, without loss of generality, we can take $r_{i_j}>0$.

Now assume that the first $k (< i_{(j+1)} - i_j)$ coordinates of $\bpi^{(j)}$ are positive. By the property of the eigenvector of $R^{(j)}$, we
have $\sum_{m=1}^{k+1} \bpi^{(j)}_{m} r_{(m+i_j-1), (k+i_j)} = r_{i_j} \bpi^{(j)}_{k+1}$, which gives
\begin{equation*}
\bpi^{(j)}_{k+1} = \frac1{r_{i_j} - r_{k+i_j}} {\sum_{m=1}^k \bpi^{(j)}_{m} r_{(m+i_j-1), (k+i_j)}}.
\end{equation*}
Now the denominator on the right side is positive, since $r_{i_j}$ is the strictly largest eigenvalue. By induction hypothesis, $\bpi^{(j)}_{m} > 0$, for $m = 1, \ldots, k$. Also, by~\eqref{eq: arrangement}, $r_{(m+i_j-1), (k+i_j)} > 0$ for some $m = 1, \ldots, k$. This proves the induction step and the lemma.
\end{proof}

We also denote $\Pi_n (s) = \prod_{i=0}^{n-1} \left( 1+\frac{s}{i+1} \right)$. Recall that Euler's formula for gamma function gives
\begin{equation} \label{eq: euler}
\Pi_n (s) \sim n^s / \Gamma(s+1), \text{ if $s$ is not a negative integer}.
\end{equation}
\end{section}

\begin{section}{Main Results} \label{sec: result}
Now we state and prove our main result on the strong convergence of individual color counts.

\begin{thm} \label{thm: main}
Suppose $R$ is a $(K+1) \times (K+1)$ balanced and triangular with row sums one and $(J+1)$ blocks and the colors are in increasing order satisfying~\eqref{eq: unique arrangement}. Then, for $j=1, 2, \ldots, J+1$,
\begin{equation*}
\bC^{(j)}_{N} / \{N^{\lambda_j} (\log N)^{\nu_{j}}\} \to \bpi^{(j)}
V_j
 \ \text{almost surely, as well as in} \ L^2,
\end{equation*}
where $V_{J+1} = 1$. If $r_1=0$, then $V_1 = \bC_{01}$. If $r_1>0$, then $V_1$ is a non-degenerate random variable. For $j = 2, 3, \ldots, J$, if $\nu_{j} = 0$, then $V_j$ is also some non-degenerate random variable. If $\nu_{j} > 0$, we further have
\begin{equation} \label{eq: limit reln}
V_j = \frac1{\nu_{j}} \bpi^{(j-1)} \brho^{(j-1)} V_{j-1}.
\end{equation}
\end{thm}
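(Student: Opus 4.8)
The plan is to run the martingale method for balanced urns, using triangularity to peel off blocks one at a time. Write $\bchi_n$ for the row indicator of the colour drawn at trial $n$, so that $\bC_n=\bC_{n-1}+\bchi_n R$ and $E[\bchi_n\mid\mathcal F_{n-1}]=\bC_{n-1}/n$ with $\mathcal F_n$ the natural filtration. Since $R$ is upper triangular, the truncation $\bC_n^{[\le j]}$ of $\bC_n$ to the colours of blocks $1,\dots,j$ evolves as a closed urn driven by the corresponding principal submatrix $R^{[\le j]}$, whose largest eigenvalue is $\lambda_j$ and is simple exactly when $\nu_j=0$. I would prove the theorem by induction on $j=1,\dots,J+1$, and within each block by a further induction on the position of the colour, carrying along the a priori moment bound $E[\bC_{n,k}]=O(n^{\lambda_j}(\log n)^{\nu_j})$ for $k$ in block $j$; this bound is produced inside the same induction by telescoping $E[\bC_{n,i}]=E[\bC_{n-1,i}](1+r_i/n)+\tfrac1n\sum_{m<i}E[\bC_{n-1,m}]r_{mi}$ against Euler's formula~\eqref{eq: euler} (equivalently, by estimating the entries of $\prod_{k=1}^n(I+R/k)$). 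The case $r_1=0$ makes block $1$ a singleton with $\bC_{n,1}\equiv\bC_{01}$, which is immediate, so assume $r_1>0$ below.

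For the leading colour of a block with $\nu_j=0$: here $\lambda_j$ strictly exceeds all diagonal entries among colours $1,\dots,i_{j+1}-1$, hence is a simple eigenvalue of $R^{[\le j]}$; let $\bu$ be a right eigenvector normalised by $u_{i_j}=1$. Solving $(R^{[\le j]}-\lambda_j I)\bu=\bzero$ upward gives $u_k=0$ for $i_j<k<i_{j+1}$ and $u_k\ge0$ for $k<i_j$. Then $Z_n:=\bC_n^{[\le j]}\bu/\Pi_n(\lambda_j)$ is an $\mathcal F_n$-martingale, and a direct computation yields $E[(Z_n-Z_{n-1})^2\mid\mathcal F_{n-1}]\le \tfrac{\lambda_j^2}{n\,\Pi_{n-1}(\lambda_j)^2}\sum_k \bC_{n-1,k}u_k^2$. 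Taking expectations and using the a priori bounds (colours $k<i_j$ lie in blocks with $\lambda<\lambda_j$, so contribute $O(n^{\lambda_j-\varepsilon})$, while $E[\bC_{n-1,i_j}]=O(n^{\lambda_j})$) gives $E[(Z_n-Z_{n-1})^2]=O(n^{-1-\lambda_j})$, which is summable because $\lambda_j>0$. Hence $Z_n$ is $L^2$-bounded and converges a.s.\ and in $L^2$. Since the colours preceding block $j$ have rate $o(n^{\lambda_j})$, $\bC_{n,i_j}/\Pi_n(\lambda_j)$ has the same limit $Z_\infty$, so $\bC_{n,i_j}/n^{\lambda_j}\to V_j:=Z_\infty/\Gamma(\lambda_j+1)$; non-degeneracy of $V_j$ (for every $j$ with $\nu_j=0$, in the non-trivial urn) follows from $\operatorname{Var}(Z_\infty)=\sum_{n\ge1}E[(Z_n-Z_{n-1})^2]>0$.

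For the leading colour of a block with $\nu_j>0$: then $\lambda_{j-1}=\lambda_j$, $\nu_{j-1}=\nu_j-1$, and $\bpi^{(j-1)}\brho^{(j-1)}>0$ by Lemma~\ref{lem: matrix} and~\eqref{eq: unique arrangement}. Use the Doob decomposition $W_n:=\bC_{n,i_j}/\Pi_n(\lambda_j)=W_0+M_n+A_n$ with $M_n$ a martingale and $A_n=\sum_{k\le n}\tfrac1{k\Pi_k(\lambda_j)}\sum_{m<i_j}\bC_{k-1,m}r_{m,i_j}$ increasing and predictable. By the induction hypothesis, $\sum_{m<i_j}\bC_{k-1,m}r_{m,i_j}=\bC_{k-1}^{(j-1)}\brho^{(j-1)}+(\text{contributions from blocks }<j-1)$, where the latter is $o(k^{\lambda_j}(\log k)^{\nu_j-1})$ a.s.\ (blocks with smaller $\lambda$ are $o(k^{\lambda_j})$, while an earlier block with the same $\lambda$ has at least two fewer logarithms), so the increment $a_k$ of $A_n$ satisfies $a_k\sim \Gamma(\lambda_j+1)\tfrac{(\log k)^{\nu_j-1}}{k}\bpi^{(j-1)}\brho^{(j-1)}V_{j-1}$; a Toeplitz/Cesàro argument and $\sum_{k\le n}(\log k)^{\nu_j-1}/k\sim(\log n)^{\nu_j}/\nu_j$ give $A_n\sim \tfrac{\Gamma(\lambda_j+1)}{\nu_j}(\log n)^{\nu_j}\bpi^{(j-1)}\brho^{(j-1)}V_{j-1}$. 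The conditional second moment of each increment of $M_n$ is $O\big(a_n/\Pi_n(\lambda_j)+\lambda_j\bC_{n-1,i_j}/(n\Pi_n(\lambda_j)^2)\big)$, whence $\langle M\rangle_n=O(A_n)$; since $A_n\uparrow\infty$, the martingale strong law gives $M_n=o(A_n)$ a.s., so $W_n\sim A_n$ and $\bC_{n,i_j}/\{n^{\lambda_j}(\log n)^{\nu_j}\}\to \tfrac1{\nu_j}\bpi^{(j-1)}\brho^{(j-1)}V_{j-1}=V_j$, which is~\eqref{eq: limit reln}.

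For the non-leading colours $k=i_j+1,\dots,i_{j+1}-1$: repeat the Doob-decomposition argument for $\bC_{n,k}/\Pi_n(r_k)$. Now $r_k<\lambda_j$, so there is no resonance; the input $\sum_{m<k}\bC_{n-1,m}r_{m,k}$ is dominated by the already-treated colours $i_j,\dots,k-1$ of block $j$ — which genuinely contribute precisely because of~\eqref{eq: arrangement} — and the drift term alone fixes the limit, giving $\bC_{n,k}/\{n^{\lambda_j}(\log n)^{\nu_j}\}\to \tfrac1{\lambda_j-r_k}\big(\sum_{m=i_j}^{k-1}\bpi^{(j)}_{m-i_j+1}r_{m,k}\big)V_j$, which the left-eigenvector identity $\bpi^{(j)}R^{(j)}=\lambda_j\bpi^{(j)}$ rewrites as $\bpi^{(j)}_{k-i_j+1}V_j$; this closes the inner and the main induction. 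The $L^2$ statements then follow from the a priori $L^2$ bounds together with the a.s.\ convergence just proved, via uniform integrability of the squares. The main obstacle throughout is the bookkeeping that shows, in each Doob decomposition, that exactly the advertised contribution survives at leading order while everything else is of strictly smaller order — an asymptotic analysis of iterated products and weighted sums propagated through the whole block structure — and this is precisely where the ordering hypotheses~\eqref{eq: arrangement} and~\eqref{eq: unique arrangement} are needed.
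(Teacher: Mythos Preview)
Your strategy coincides with the paper's: induction on the colour index, an $L^2$-bounded eigenvector martingale for leading colours with $\nu_j=0$, and a Doob decomposition $W_n=W_0+M_n+A_n$ of $\bC_{n,l}/\Pi_n(r_l)$ for the remaining colours, with the drift $A_n$ delivering the limit. The identification of the limits as $\bpi^{(j)}_kV_j$ via the eigenvector relation, and of $V_j$ via~\eqref{eq: limit reln}, is exactly as in the paper. However, two steps in your execution are not justified as written.

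First, the pathwise claim $\langle M\rangle_n=O(A_n)$ in the Doob-decomposition cases has a circularity. The conditional second moment of $M_n-M_{n-1}$ contains, besides the terms you display, the self-referential contribution $r_l^2\,\bC_{n-1,l}/\bigl(n\,\Pi_n(r_l)^2\bigr)$ coming from colour $l$ itself (whenever $r_l>0$). You cannot bound $\bC_{n-1,l}$ pathwise before the convergence is established, and the crude bound $\bC_{n-1,l}\le n$ is too weak when $r_l$ is small. The paper handles this by first proving the \emph{moment} bound $E[\bC_{N,l}]=O(N^{\lambda_j}(\log N)^{\nu_j})$ (its Step~1, via the same telescoping you mention) and then using it to control $E[(M_{N+1}-M_N)^2]$. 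When $r_l>\lambda_j/2$ this already makes $M_N$ $L^2$-bounded and hence a.s.\ convergent, so $M_N=o(A_N)$; but when $r_l\le\lambda_j/2$ (and in the analogue $\lambda_j=0$, $\nu_j>0$) the increments are \emph{not} square-summable, and the paper needs an additional device: it shows a suitably scaled $M_N$ is $L^2$-bounded and then proves $Y_N:=M_N\Delta_N\to0$ a.s.\ by splitting $Y_{N+1}-Y_N=M_{N+1}(\Delta_{N+1}-\Delta_N)+\Delta_N(M_{N+1}-M_N)$ and showing each piece is summable. Your appeal to the martingale strong law hides precisely this sub-case analysis.

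Second, the closing sentence ``the $L^2$ statements then follow from the a priori $L^2$ bounds together with the a.s.\ convergence, via uniform integrability of the squares'' is unsupported: you have only produced first-moment bounds $E[\bC_{n,k}]=O(\cdot)$, not second-moment bounds, so uniform integrability of the squares is not available. In the paper the $L^2$ convergence is not deduced after the fact; it is established simultaneously with the a.s.\ convergence inside the induction, because the martingales (or their scaled versions) are shown to converge in $L^2$, and the drift $A_N$ converges in $L^2$ by the $L^2$ part of the induction hypothesis for the earlier colours.
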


\begin{rem} \label{rem: nonzero}
Note that, due to Lemma~\ref{lem: matrix} $\bpi^{(j-1)}$ has all entries positive, and $\brho^{(j-1)}$ has at least one entry positive by~\eqref{eq: unique arrangement}, which makes $\bpi^{(j-1)} \brho^{(j-1)}>0$ and hence, recursively, all the $V_j$'s are nondegenerate.
\end{rem}

\begin{rem} \label{rem: diag 1}
If $r_j=1$ for some $j \leq K$, then the rest of the entries in the $j$-th row are zero. Thus,~\eqref{eq: arrangement} requires $r_{j+1}=1$ and the $(j+1)$-st color will be the leading color of a new block. However, if~\eqref{eq: unique arrangement} is also assumed, even this is not possible. So, in the setup of Theorem~\ref{thm: main} above, we must have $r_j < 1$ for all $j \leq K$.
\end{rem}

\begin{rem} \label{rem: alternate}
The rearrangement of the colors to the increasing order and Condition~\eqref{eq: unique arrangement} help us identify the limits in Theorem~\ref{thm: main}. However, it will be clear from the proof that even without this assumption, appropriate strong laws hold. In this approach, we do not use the concept of blocks. It can be shown that
$$\frac{1}{N^{r_1}} \bC_{N1} \to W_1 , \ \text{almost surely}$$
for some random variable $W_1$. We can then define inductively, the rates for all colors $j > 1$ as follows: assume that for all $1 \leq j \leq k$, there exists $s_j$ and $\delta_j$ and random variables $W_j$ such that
$$\frac1{ N^{s_j} (\log N)^{\delta_{j}}} \bC_{Nj} \to W_j  \ \text{almost surely.}$$
If the part of the $(k+1)$-st column above the diagonal has all entries $0$, then for some random variable $W_{k+1}$,
$$\frac1{N^{r_{(k+1)}}} \bC_{N, (k+1)} \to W_{k+1}.$$

On the other hand, suppose $r_{j, (k+1)}>0$ for some $j=1, 2, \ldots, k$. Consider all the colors indexed by $j$ such that $r_{j, (k+1)}>0$. Let the highest rate of convergence for such color counts be $n^s (\log n)^\delta$. Then one can say that
$$\frac1{a_n} \bC_{N, (k+1)} \to W_{k+1}\ \text{almost surely,}$$ for some random variable $W_{k+1}$ where
\begin{equation*}
a_n =
\begin{cases}
n^s (\log n)^\delta, &\text{if $r_{k+1} < s,$}\\
n^s (\log n)^{\delta+1}, &\text{if $r_{k+1} = s,$}\\
n^{r_{k+1}}, &\text{if $r_{k+1} > s$}.
\end{cases}
\end{equation*}
It is not clear if one can easily write down how the $W_k$'s are related. However, under Condition~\eqref{eq: unique arrangement}, if we rearrange the colors in the increasing order, the subvectors corresponding to each block obtained from the rearranged $W_k$'s are of course the same as $\bpi^{(j)} V_j$'s and will satisfy~\eqref{eq: limit reln}.
\end{rem}

\begin{proof}[Proof of Theorem~\ref{thm: main}]
The proof is through induction on the index of color $l$. Let $\bchi_n$ be the row vector of order $(K+1)$ with the $m$-th entry 1 if the $m$-th color is drawn at the $n$-th draw, the other entries being zero. Let $\mathcal{F}_n$ denote the $\sigma$-field generated by $\{ \bchi_k : 1 \leq k \leq n\}$.

So we first quickly verify the result for $l=1$.  If $r_1 = 0$, then the entire first column is zero, so the first color count cannot change whichever color be drawn. Thus $\bC_{n1}$ stays constant at $\bC_{01}$ and the result is trivially true. Next consider $r_1>0$. In that case, we pool all the remaining colors and that gives us the replacement matrix
\begin{equation*}
\begin{pmatrix}
  r_1 & 1 - r_1\\
  0 & 1
\end{pmatrix}.
\end{equation*}
Then the result  for $l=1$ follows from Proposition 2.2 (iii) of
\cite{bose:dasgupta:maulik:2008}.

Now assume that the result holds for the first $(l-1)$ colors for some $l \geq 2$. Suppose the next color is the $k$-th color of the $j$-th block. Then we have $l=i_j+k-1$.

The following two observations follow from the induction hypothesis:

(i) If $l$ is a leading color, that is, $k=1$ and $l=i_j$, we have
\begin{equation} \label{eq: induction}
\frac{\bC_{Nm}}{N^{\lambda_{j-1}} (\log N)^{\nu_{{j-1}}}} \to
\begin{cases}
\bpi^{(j-1)}_{m+1-i_{(j-1)}} V_{j-1}, &\text{if $i_{(j-1)} \leq m
< i_j$},\\
0, &\text{if $m < i_{(j-1)}$}
\end{cases}
\end{equation}
almost surely, as well as, in $L^2$.

(ii) If $l$ is not a leading color, that is, $k>1$ and $l>i_j$, we
have
\begin{equation} \label{eq: induction not lead}
\frac{\bC_{Nm}}{N^{\lambda_{j}} (\log N)^{\nu_j}} \to
\begin{cases}
\bpi^{(j)}_{m+1-i_j} V_{j}, &\text{if $i_j \leq m < l$},\\
0, &\text{if $m < i_j$}
\end{cases}
\end{equation}
almost surely, as well as, in $L^2$. In particular, we have, for $m<l$,
\begin{equation} \label{eq: induction moment}
E[\bC_{Nm}] =
\begin{cases}
O(N^{\lambda_{j-1}} (\log N)^{\nu_{j-1}}), &\text{if $k=1$},\\
O(N^{\lambda_j} (\log N)^{\nu_j}), &\text{if $k>1$}.
\end{cases}
\end{equation}

We separate the proof into three cases: Case 1:  $k=1$, $\nu_j = 0$; Case 2:  $k=1$, $\nu_j>0$ and Case 3: $k>1$.

\textbf{Case 1: $k=1$ and $\nu_j = 0$.} Let $\bzeta$ be a right eigenvector of $R$ for the eigenvalue $\lambda_{j} = r_{i_{j}}$, normalized so that its $\bzeta_{i_{j}} = 1$. Observe that $\bzeta_k = 0$ for $k> i_{j}$, and since $\nu_j = 0$ gives $r_k < r_{i_{j}}$ for all $k<i_{j}$, we have $\bzeta_k \geq 0$, for $k\leq i_{j}$.

Consider the martingale $U_N = \bC_N \bzeta / \Pi_N
(\lambda_{j})$. Then the martingale difference is
\begin{equation*}
U_{N+1} - U_N = \frac{\lambda_{j}}{\Pi_{N+1} (\lambda_{j})}
\left(\bchi_{{}_{N+1}} - \frac{\bC_N}{N+1} \right) \bzeta.
\end{equation*}
Denote $\bzeta^2$ to be the column vector whose coordinates are squares of those of $\bzeta$. Hence, we have
\begin{multline*}
E[(U_{N+1} - U_N)^2 | \mathcal F_N] =
\frac{\lambda_{j}^2}{(\Pi_{N+1} (\lambda_{j}))^2} \left[ \frac{\bC_N
\bzeta^2}{N+1} - \left( \frac{\bC_N \bzeta}{N+1} \right)^2 \right]\\
\leq \frac{1}{(\Pi_{N+1} (\lambda_{j}))^2} \frac{\bC_N \bzeta^2}{N+1} \leq \frac{\|\bzeta \|_\infty}{(N+1) \Pi_{N+1}(\lambda_j)} U_N \leq \| \bzeta \|_\infty \Gamma(\lambda_j+1) \frac{1+U_N^2}{(N+1)^{1+\lambda_{j}}},
\end{multline*}
for all large enough $N$, where $\| \bzeta \|_\infty$ is the largest coordinate of $\bzeta$ (recall that all the coordinates of $\bzeta$ are non-negative) and the last inequality follows using the fact $2 U_N \leq 1+U_N^2$ and~\eqref{eq: euler}.

As in the proof of Proposition 2.2 (iii) of~\cite{bose:dasgupta:maulik:2008}, this gives an iteration for $1 + E[U_N^2]$ and we can prove $U_N$ is $L^2$-bounded and hence converges almost surely, as well as, in $L^2$. Thus,
by~\eqref{eq: euler}, $\bC_N \bzeta / N^{\lambda_j}$ also converges almost surely, as well as, in $L^2$, to $V_j$, say.

Note that $U_1 = \bC_1 \bzeta / (1+\lambda_j) = (\bC_0 \bzeta + \lambda_j \bchi_1 \bzeta)/ (1+\lambda_j)$. Since $\bC_0$ has all coordinates positive, $\bchi_1$ takes all coordinate vectors as values with positive probability. Thus, $\bchi_1 \bzeta$ is constant if and only if $\bzeta$ has all coordinates of same value. This will be the case if and only if the corresponding eigenvalue is $1$, which, by Remark~\ref{rem: diag 1}, holds if and only if $l=K+1$ and $j=J+1$. So in that case, $\bC_N \bzeta = N+1$ and we have $U_N =
\bC_N \bzeta / (N+1) = 1 = V_{J+1}$.

If $j\leq J$, $U_1$ is nondegenerate and hence has positive variance. Since $U_N$ is a martingale, variance of $U_N$ is nondecreasing and the limit variable has nonzero variance. So the limit variable $V_j$ is nondegenerate for $j\leq J$.

Finally using the limit of $\bC_N \bzeta / N^{\lambda_{j}}$, since $\bzeta_k = 0$ for $k>i_j$ and $\bzeta_{i_j} = 1$, we have
$$\lim_{N\to\infty} \frac1{N^{\lambda_j}} \bC_{N1}^{(j)} = V_j - \sum_{m=1}^{i_j-1} \bzeta_m \lim_{N\to\infty} \frac1{N^{\lambda_j}} \bC_{Nm} \ \text{almost surely and in}\  L^2,$$ provided the limits on the right side exist. Now, since $\nu_j = 0$, we have $\lambda_{j-1} < \lambda_j$, and by~\eqref{eq: induction}, the limits on the right side are all zero. Thus,
$$\frac1{N^{\lambda_j}} \bC_{N1}^{(j)} \to V_j\ \text{almost surely and in}\  L^2 ,$$ and $V_j$ is nondegenerate for $j\leq J$, $V_{J+1}=1$. Since $\bpi^{(j)}_1 = 1$, we have proved the induction step for $k=1$ and $\nu_j=0$ for Case 1.

For the remainder of the cases, the proof is done in two steps. We first show $L^1$-boundededness of $Z_N := \bC^{(j)}_{Nk} / ( N^{\lambda_{j}} (\log N)^{\nu_j} )$ and then we show the required almost sure and $L^2$-convergence by constructing an appropriate martingale.

\textit{Step 1 ($L^1$ bound)}\,: Observe that $\bC_{N+1, l} = \bC_{Nl} + \sum_{m=1}^{l} \bchi_{{}_{N+1, m}} r_{ml},$ which gives,
\begin{align}
E \left[\bC_{N+1, l} | \mathcal{F}_N\right] &= \bC_{Nl} \left( 1 +
\frac{r_l}{N+1} \right) + \frac1{N+1} \sum_{m=1}^{l-1} \bC_{N m} r_{ml}, \label{eq: cond exp lead rep}
\intertext{leading to}
E \left[\bC_{N+1, l}\right] &= E [\bC_{Nl}] \left( 1 + \frac{r_l}{N+1} \right) + \frac1{N+1} \sum_{m=1}^{l-1} E [\bC_{N m}] r_{ml} \nonumber
\intertext{Iterating, we have}
&= \bC_{0l} \Pi_N(r_l) + \sum_{m=1}^{l-1} r_{ml} \sum_{n=0}^N \frac1{n+1} E [\bC_{nm}] \frac{\Pi_N(r_l)}{\Pi_n(r_l)}
\intertext{to conclude, using $\bC_{0l} = \bC^{(j)}_{0k}$ and $\bC_{Nl} = \bC^{(j)}_{Nk}$,}
\frac{E \left[ \bC^{(j)}_{Nk} \right]}{\Pi_N(r_l)} &= \bC^{(j)}_{0k} + \sum_{m=1}^{l-1} r_{ml} \sum_{n=0}^{N-1} \frac1{n+1} \frac{E[\bC_{nm}]}{\Pi_n(r_l)}. \label{eq: exp lead rep}
\end{align}

\textbf{Case 2: $k=1$ and $\nu_j>0$}. We also have $\nu_{j-1} = \nu_j - 1$ and $r_l = \lambda_j = \lambda_{j-1}$. Then using~\eqref{eq: euler},~\eqref{eq: induction moment} and~\eqref{eq: exp lead rep}, we have
\begin{equation*}
\frac{E\left[ \bC_{Nk}^{(j)} \right]}{\Pi_{N}(\lambda_j)} = \bC^{(j)}_{0k} + \sum_{m=1}^{l-1} r_{ml} \sum_{n=0}^{N-1} \frac{(\log (n+2))^{\nu_{j - 1}}}{n+1} \frac{E[\bC_{nm}]}{n^{\lambda_j} (\log (n+2))^{\nu_j - 1}} \frac{n^{r_l}}{\Pi_n(r_l)} = O\left((\log n)^{\nu_j}\right).
\end{equation*}
Thus, again using~\eqref{eq: euler}, $\{Z_N\}$ becomes $L^1$-bounded.

\textbf{Case 3: $k>1$}. Here we have $r_l < \lambda_j$. Then using~\eqref{eq: euler},~\eqref{eq: induction moment} and~\eqref{eq: exp lead rep}, we have
\begin{equation*}
\frac{E\left[ \bC_{Nk}^{(j)} \right]}{\Pi_{N}(r_l)} = \bC^{(j)}_{0k} + \sum_{m=1}^{l-1} r_{ml} \sum_{n=0}^{N-1} \frac{(\log (n+2))^{\nu_j}}{(n+1) n^{r_l - \lambda_j}} \frac{E[\bC_{nm}]}{n^{\lambda_j} (\log (n+2))^{\nu_j}} \frac{n^{r_l}}{\Pi_n(r_l)} = O\left(n^{\lambda_j - r_l} (\log n)^{\nu_j}\right).
\end{equation*}
Thus, again using~\eqref{eq: euler}, $\{Z_N\}$ becomes $L^1$-bounded.

\textit{Step 2 (Convergence)}\,: Now we construct the relevant martingale. Using~\eqref{eq: cond exp lead rep}, it is easy to check that
\begin{equation} \label{eq: def mg lead rep}
M_N = \frac{\bC_{Nl}}{\Pi_N(r_l)} - \sum_{m=1}^{l-1} \sum_{n=0}^{N-1} \frac{r_{ml}}{n+1+r_l} \frac{\bC_{nm}}{\Pi_n(r_l)}
\end{equation}
forms a martingale. The martingale difference is given by
\begin{equation*}
M_{N+1} - M_N = \frac1{\Pi_{N+1}(r_l)} \sum_{m=1}^l \left( \bchi_{{}_{(N+1),m}} - \frac{\bC_{Nm}}{N+1} \right) r_{ml}
\end{equation*}
which leads to, using~\eqref{eq: induction moment} and $L^1$-boundedness of $Z_N$,
\begin{align}
E[ (M_{N+1} - M_N)^2 ] &= \frac1{(\Pi_{N+1}(r_l))^2} E\left[ \frac{\sum_{m=1}^l \bC_{Nm} r_{ml}^2}{N+1} - \left( \frac{\sum_{m=1}^l \bC_{Nm} r_{ml}}{N+1}
\right)^2 \right] \nonumber\\
&\leq \frac1{(\Pi_{N+1}(r_l))^2} E\left[\frac{\sum_{m=1}^l \bC_{Nm}
r_{ml}^2}{N+1}\right] \label{eq: sq mg diff inter}\\
&= O \left( \frac{(\log N)^{\nu_j}}{N^{1+2r_l-\lambda_j}} \right). \label{eq: sq mg diff}
\end{align}

When $k=1$, $\nu_j>0$ and $\lambda_j = r_l = 0$, we can further improve on the order of the squared moment of the martingale difference given in~\eqref{eq: sq mg diff}. First, observe that $l$ being a leading color and $r_l =0$ imply that $r_m = 0$ for all $m < l$, which makes each of the colors indexed by $m \leq l$ a leading color of a block of size $1$. This implies $j=l$. Since the diagonal elements corresponding to all these colors are $0$, we have $\nu_m = m-1$ for $m \leq l$. Since $\nu_j = j-1 = l-1 > 0$, we have $l\geq 2$. Thus,~\eqref{eq: induction moment} simplifies to $E[\bC_{nm}] = O \left( (\log n)^{m-1} \right)$, for $m<l$. Also, $r_l$ being zero, the $l$-th term in the sum of~\eqref{eq: sq mg diff inter} does not contribute. Hence, we have
\begin{equation} \label{eq: sq mg diff lead zero}
E[ (M_{N+1} - M_N)^2 ] = O \left( \frac{(\log N)^{(\nu_j-1)}}{N} \right).
\end{equation}

\textbf{Case 2: $k=1$ and $\nu_j>0$.} Here we have $l=i_j$ and $r_l = \lambda_j$. First assume that $r_l=\lambda_j>0$. Then the right side of~\eqref{eq: sq mg diff} is summable. Hence $M_N$ is an $L^2$-bounded martingale, which converges almost surely, as well as in $L^2$. Since $\nu_j
> 0$, we have
$$\frac1{(\log N)^{{\nu_j}}} M_N \to 0 \ \text{almost surely and in} \ L^2.$$

Next assume $r_l=\lambda_j=0$. Then, using~\eqref{eq: sq mg diff lead zero}, we have $M_N/(\log N)^{\nu_j/2}$ is $L^2$-bounded. Hence, we have $M_N/(\log N)^{\nu_j} \to 0$ in $L^2$. We shall now show that
$$Y_N := \frac1{(\log N)^{\nu_j}} M_N\ \text{converges almost surely.}$$
Since the $L^2$-limit is known to be $0$, we shall then have $Y_N \to 0$ almost surely as well as in $L^2$. With $\Delta_N = 1/(\log N)^{\nu_j}$, we have $Y_N = M_N \Delta_N$, which gives
\begin{equation} \label{eq: Z diff lead zero}
Y_{N+1} - Y_N = M_{N+1} (\Delta_{N+1} - \Delta_N) + \Delta_N (M_{N+1} - M_N).
\end{equation}
Thus, it is enough to show that the partial sums of each of the terms on the right side of~\eqref{eq: Z diff lead zero} converges almost surely.

Now, $\Delta_N$ is a deterministic sequence and
\begin{equation*}
|\Delta_{N+1} - \Delta_N| = \Delta_{N+1} \left[ \left( \frac{\log (N+1)}{\log N} \right)^{\nu_j} - 1 \right] \sim \frac{\nu_j}{N (\log N)^{\nu_j + 1}}.
\end{equation*}
We further know that $M_N / (\log N)^{\nu_j/2}$ is $L^2$-bounded and hence $L^1$-bounded. Thus, recalling $\nu_j>0$, we have $E [|M_{N+1} (\Delta_{N+1} - \Delta_N)|]$ is summable and hence the first term on the right side of~\eqref{eq: Z diff lead zero} is almost surely absolutely summable.

Now, since $M_{N+1} - M_N$ is a martingale difference, so is the second term on the right side of~\eqref{eq: Z diff lead zero} as well. By~\eqref{eq: sq mg diff lead zero}, we have
\begin{equation*}
E [\Delta_N^2 (M_{N+1} - M_N)^2] = O\left(\frac1{(\log N)^{2 \nu_j}} \frac{(\log N)^{\nu_j -1}}{N} \right),
\end{equation*}
which is summable, as $\nu_j>0$. Hence the second term on the right side of~\eqref{eq: Z diff lead zero} is the difference sequence of a martingale which converges almost surely, as well as in $L^2$. Thus we obtain that $Y_N$ converges almost surely, as well as, in $L^2$ to 0 even when $r_l = \lambda_j = 0$.

Hence, under the assumption $k=1$ and $\nu_j>0$,
$$\frac1{(\log N)^{\nu_j}} M_N \to 0 \ \text{almost surely and in} \ L^2.$$

Using~\eqref{eq: def mg lead rep}, we then have
\begin{equation*}
\lim_{N\to\infty} \frac{\bC_{N, i_j}}{N^{\lambda_{j}} (\log N)^{\nu_{j}}} = \lim_{N\to\infty} \frac{\Pi_N(\lambda_j)}{N^{\lambda_{j}} (\log N)^{\nu_{j}}}
\sum_{m=1}^{i_j-1} \sum_{n=0}^{N-1} r_{m, i_j} \frac{(\log n)^{\nu_{j}-1}}{n+1+\lambda_{j}} \frac{n^{\lambda_{j}}}{\Pi_n(\lambda_{j})}
\frac{\bC_{nm}}{n^{\lambda_{j}} (\log n)^{\nu_{j}-1}},
\end{equation*}
where the limit is in the almost sure as well as $L^2$ sense. Since $\nu_j>0$, we have $\lambda_j = \lambda_{j-1}$ and $\nu_j - 1 = \nu_{j-1}$. Thus, from~\eqref{eq: euler} and~\eqref{eq: induction},
we have
\begin{equation} \label{eq: vj one}
\frac{\bC^{(j)}_{N1}}{N^{\lambda_{j}} (\log N)^{\nu_{j}}} = \frac{\bC_{Ni_j}}{N^{\lambda_{j}} (\log N)^{\nu_{j}}} \to \frac1{\nu_{j}} \sum_{m=1}^{i_j - i_{(j-1)}} \bpi^{(j-1)}_{m} \brho^{(j-1)}_{m} V_{j-1} = \frac1{\nu_j} \bpi^{(j-1)} \brho^{(j-1)} V_{j-1}
\end{equation}
almost surely, as well as, in $L^2$. We obtain the formula of $V_{j}$ in terms of $V_{j-1}$ from~\eqref{eq: vj one}. Since, by normalization, $\bpi^{(j)}_1 = 1$, we have the induction step for $k=1$, $\nu_j>0$.

\textbf{Case 3: $k>1$.} Thus, $r_l<\lambda_j$ holds and hence $\lambda_j > 0$. If $r_l > \lambda_j/2$, using~\eqref{eq: sq mg diff}, we have $M_N$ is an $L^2$-bounded martingale and hence converges almost surely as well as in $L^2$. Thus
$$M_N/\{N^{\lambda_j - r_l} (\log N)^{\nu_j}\} \to 0 \ \text{almost surely and in} \ L^2.$$

The analysis is a bit more elaborate when $r_l \leq \lambda_j/2$. If $r_l = \lambda_j/2$, using~\eqref{eq: sq mg diff}, $M_N / (\log N)^{(\nu_j+1)/2}$ is $L^2$-bounded. On the other hand, if $r_l < \lambda_j/2$, again using~\eqref{eq: sq mg diff}, $M_N / \left(N^{\lambda_j/2 - r_l} (\log N)^{\nu_j/2}\right)$ is $L^2$-bounded. Hence, for $r_l \leq \lambda_j/2$,
$$M_N/\{N^{\lambda_j - r_l} (\log N)^{\nu_j}\} \to 0\ \text{in}\  L^2.$$

We shall now show that $Y_N := M_N/\{N^{\lambda_j - r_l} (\log N)^{\nu_j}\}$ converges almost surely (and hence to 0) even when $r_l \leq \lambda_j/2$. With $\Delta_N = 1/\{N^{\lambda_j - r_l} (\log N)^{\nu_j}\}$, we have $Y_N = M_N \Delta_N$, which gives
\begin{equation*}
Y_{N+1} - Y_N = M_{N+1} (\Delta_{N+1} - \Delta_N) + \Delta_N (M_{N+1} - M_N).
\end{equation*}
As before, it is enough to show that the partial sums of each of the terms on the right side converges almost surely, which can be proved in a similar manner, using
\begin{equation*}
|\Delta_{N+1} - \Delta_N| = \Delta_{N+1} \left[ \left( 1+\frac1N \right)^{\lambda_j - r_l} \left( \frac{\log (N+1)}{\log N}
\right)^{\nu_j} - 1 \right] \sim \frac{\lambda_j - r_l}{N^{1 +
\lambda_j - r_l} (\log N)^{\nu_j}}.
\end{equation*}
We leave the details for the reader.

Using~\eqref{eq: def mg lead rep}, we have
\begin{equation*}
\lim_{N\to\infty} \frac{\bC_{Nl}}{N^{\lambda_{j}} (\log N)^{\nu_{j}}} = \lim_{N\to\infty} \frac{\Pi_N(r_l)}{N^{\lambda_j} (\log N)^{\nu_{j}}} \sum_{m=1}^{l-1} \sum_{n=0}^{N-1} r_{ml} \frac{(\log n)^{\nu_{j}}}{(n+1+r_l) n^{r_l - \lambda_j}} \frac{n^{r_l}}{\Pi_n(r_l)} \frac{\bC_{nm}}{n^{\lambda_{j}} (\log n)^{\nu_{j}}},
\end{equation*}
where the limit is in the almost sure as well as  $L^2$ sense. Thus, from~\eqref{eq: induction not lead}, using the fact that $\bpi^{(j)}$ is the left eigenvector of $R^{(j)}$ for the eigenvalue $\lambda_j$, we have,
\begin{equation} \label{eq: vj}
\frac{\bC^{(j)}_{Nk}}{N^{\lambda_{j}} (\log N)^{\nu_{j}}} = \frac{\bC_{Nl}}{N^{\lambda_{j}} (\log N)^{\nu_{j}}} \to \frac1{\lambda_j - r_l} \sum_{m=1}^{k - 1} \bpi^{(j)}_{m} r^{(j)}_{mk} V_j = \bpi^{(j)}_{k} V_j \ \text{almost surely and in} \ L^2.
\end{equation}
This completes the proof of the induction step and the proof of the Theorem is complete.
\end{proof}
\end{section}

\begin{section}{Three color urns} \label{sec: three col}
We now specialize to three color urns. The replacement matrix is then
\begin{equation} \label{eq: matrix three}
R =
\begin{pmatrix}
r_{11} &r_{12} &r_{13}\\
0 &r_{22} &r_{23}\\
0 &0 &1
\end{pmatrix}.
\end{equation}
We assume that the entries are non-negative, each row sum is one and~\eqref{eq: unique arrangement} holds. The latter is equivalent to assuming $r_{11}<1$, $r_{22}<1$; and  $r_{12}>0$, whenever $r_{11} = r_{22}$.

This three color urn model has already been considered
in~\cite{flajolet:dumas:puyhaubert:2006}, who further assumed that $r_{11}> 0$, $r_{12}>0 $ and $r_{22}> 0$. Under these  assumptions, they established the weak convergence of appropriately scaled $\bC_n$ and obtained the limit distributions \citep[cf.][Propositions 25 and 26]{flajolet:dumas:puyhaubert:2006}.

In contrast, we have established the almost sure convergence of scaled $\bC_n$. We restate our result as applicable to the three color urn.

\begin{cor} \label{cor: three}
Suppose we have a three color urn model with triangular replacement matrix $R$ given by~\eqref{eq: matrix three} with non-negative entries and each row sum one. Assume that $r_{11}<1$, $r_{22}<1$; and  $r_{12}>0$, whenever $r_{11} = r_{22}$. Then there exists nondegenerate random variables
$V_1$, $V_2$ and $V_3$ such that
\begin{enumerate}
\item $\bC_{n3} / n \to 1.$

\item If $r_{11}=0$, then $\bC_{n1}$ stays unchanged at $\bC_{01}$.\\
If $r_{11}>0$, then $\bC_{n1} / n^{r_{11}}\to V_1.$

\item If $r_{22} > r_{11}$, then $\bC_{n2} / n^{r_{22}} \to V_2.$

\item If $r_{22} = r_{11}$ and $r_{12} > 0$, then $\bC_{n2} / (n^{r_{22}} \log n) \to r_{12} V_1.$

\item If $r_{22} < r_{11}$ and $r_{12} > 0$, then $\bC_{n2} / n^{r_{11}} \to r_{12} V_1 / (r_{11} - r_{22}).$\\
If $0 < r_{22} < r_{11}$ and $r_{12} = 0$, then $\bC_{n2} / n^{r_{22}} \to V_3.$

\item If $r_{12} = r_{22} = 0$, then $\bC_{n2}$ stays unchanged at $\bC_{02}$.
\end{enumerate}

The convergence of all the above random variables is almost sure as well as in $L^2$.
\end{cor}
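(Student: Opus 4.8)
The plan is to derive Corollary~\ref{cor: three} as a direct specialization of Theorem~\ref{thm: main}, so the work is essentially bookkeeping: identify the blocks, the running maxima, the indices $\nu_j$, the submatrices $R^{(j)}$, and the eigenvectors $\bpi^{(j)}$ in each of the possible regimes, and then read off the conclusions. First I would recall that the third color always forms the last block (Remark~\ref{rem: block}), giving $\lambda_{J+1}=1$, $\nu_{J+1}=0$, $V_{J+1}=1$, which is statement~(i). For the remaining two colors, the structure depends on the comparison of $r_{11}$ and $r_{22}$, so I would split into the natural cases.

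\textbf{Case $r_{22}>r_{11}$.} The running maxima of the diagonals are at indices $1$, $2$ (if we also want $3$, it is the terminal one), so there are blocks $\{1\}$, $\{2\}$, $\{3\}$, each of size one, with $\lambda_1=r_{11}$, $\lambda_2=r_{22}$, and all $\nu_j=0$. Theorem~\ref{thm: main} then gives $\bC_{n1}/n^{r_{11}}\to V_1$ and $\bC_{n2}/n^{r_{22}}\to V_2$ with $V_1$, $V_2$ nondegenerate (or $\bC_{n1}$ constant if $r_{11}=0$, handled by the $r_1=0$ clause of the theorem), which is statements~(ii) and~(iii).

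\textbf{Case $r_{11}=r_{22}$.} Assumption~\eqref{eq: unique arrangement} forces $r_{12}>0$, so colors $1$ and $2$ lie in the same block, say the $j$-th, with $R^{(j)}=\left(\begin{smallmatrix} r_{11} & r_{12}\\ 0 & r_{22}\end{smallmatrix}\right)$, $\lambda_j=r_{11}=r_{22}$, and $\nu_j=0$ (it is the first occurrence of this diagonal value among colors $1,2$). The left eigenvector normalized with first coordinate $1$ is $\bpi^{(j)}=(1, r_{12}/(\lambda_j-r_{22}))$ — but since $\lambda_j=r_{22}$ this degenerates; in fact this is exactly the situation where $\nu_{j}$ for the \emph{next} block jumps. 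Rather than force the wrong block decomposition, the correct reading is: colors $1$ and $2$ both have diagonal $r_{11}$, and $r_{22}$ is a repeat, so $2$ is its own block with $\nu=1$. Concretely the blocks are $\{1\}$ (with $\lambda=r_{11}$, $\nu=0$) and $\{2\}$ (with $\lambda=r_{22}=r_{11}$, $\nu=1$), since $r_{22}=r_{11}=r_1=\lambda_1$ makes $2$ a running-maximum index with $\nu_2=1$. Then $\brho^{(1)}=(r_{12})$ (the part of the second column in the first block), $\bpi^{(1)}=(1)$, and~\eqref{eq: limit reln} gives $V_2=\frac1{\nu_2}\bpi^{(1)}\brho^{(1)}V_1=r_{12}V_1$, so $\bC_{n2}/(n^{r_{22}}\log n)\to r_{12}V_1$, which is statement~(iv); nondegeneracy of $V_1$ (hence of $r_{12}V_1$) follows since $r_{12}>0$ and Remark~\ref{rem: nonzero}. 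Statement~(ii) for $V_1$ comes from the $\{1\}$ block as before.

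\textbf{Case $r_{22}<r_{11}$.} Now $r_{11}$ is the running maximum, so if $r_{12}>0$ colors $1,2$ form a single block with $\lambda=r_{11}$, $\nu=0$; the left eigenvector of $R^{(j)}=\left(\begin{smallmatrix} r_{11} & r_{12}\\ 0 & r_{22}\end{smallmatrix}\right)$ for $\lambda=r_{11}$ normalized with first coordinate $1$ is $\bpi^{(j)}=\bigl(1,\ r_{12}/(r_{11}-r_{22})\bigr)$ (solving $\bpi^{(j)}R^{(j)}=r_{11}\bpi^{(j)}$: the second coordinate equation reads $\bpi^{(j)}_1 r_{12}+\bpi^{(j)}_2 r_{22}=r_{11}\bpi^{(j)}_2$, giving $\bpi^{(j)}_2=r_{12}/(r_{11}-r_{22})$, which is positive, consistent with Lemma~\ref{lem: matrix}). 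Theorem~\ref{thm: main} then gives $\bC^{(j)}_n/n^{r_{11}}\to\bpi^{(j)}V_1$ coordinatewise, i.e. $\bC_{n2}/n^{r_{11}}\to r_{12}V_1/(r_{11}-r_{22})$, the first half of statement~(v). If instead $r_{12}=0$, colors $1$ and $2$ decouple: $2$ is its own block with $\lambda=r_{22}$, and since $r_{22}<r_{11}=r_1$ we have $\nu=0$ there, so $\bC_{n2}/n^{r_{22}}\to V_3$ nondegenerate when $r_{22}>0$ (by the non-leading/leading argument of the theorem), or $\bC_{n2}$ is constant when $r_{22}=0$ — this covers the second half of~(v) and statement~(vi). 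Throughout, all convergences are almost sure and in $L^2$ because that is the mode of convergence asserted in Theorem~\ref{thm: main}.

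The only real subtlety, and the step I would be most careful about, is the $r_{11}=r_{22}$ case: one must check that Assumption~\eqref{eq: unique arrangement} indeed forces $r_{12}>0$ and that the resulting block decomposition assigns $\nu_2=1$ to the second color (not $\nu=0$ with a size-two block), since this is what produces the $\log n$ factor and the clean identification $V_2=r_{12}V_1$ via~\eqref{eq: limit reln}; the observation ``$\lambda_{j-1}=\lambda_j$ iff $\nu_j>0$'' from the paragraph following the definition of $\nu_j$ is exactly what pins this down. Everything else is substituting $K+1=3$ into the theorem and computing the $2\times 2$ eigenvectors, which I would present compactly rather than case-by-case in full.
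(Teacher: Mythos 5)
Your proposal is correct and follows essentially the same route as the paper, which offers no separate argument for Corollary~\ref{cor: three} beyond specializing Theorem~\ref{thm: main}: your identification of the blocks, the indices $\nu_j$, the eigenvectors $\bpi^{(j)}$, and the use of~\eqref{eq: limit reln} in the $r_{11}=r_{22}$ case (after your self-correction, the decomposition $\{1\},\{2\},\{3\}$ with $\nu_2=1$ is the right one) all match the intended reading. The one step stated too casually is the subcase $r_{12}=0$, $0<r_{22}<r_{11}$: in the given order the colors are not increasing (condition~\eqref{eq: arrangement} fails for color $2$), so "color $2$ is its own block" should be justified by first applying the rearrangement of Proposition~\ref{prop: arrangement} (swap colors $1$ and $2$, which keeps the matrix triangular) and then invoking the theorem for the rearranged urn.
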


Three color urn models with reducible and block triangular balanced replacement matrices were considered in~\cite{bose:dasgupta:maulik:2008}. They established almost sure convergence of appropriately scaled individual color counts as well as weak/strong limits of \textit{linear combinations} $\bC_n\bzeta$ for suitable vectors $\bzeta$ obtained from the Jordan decomposition  of $R$.

Armed with the strong laws obtained from Corollary~\ref{cor: three}, we can now extend the results in~\cite{bose:dasgupta:maulik:2008} to the case of three color urn models with triangular replacement matrices. Observe that $\bxi_1 = (1,0,0)'$ and $\bxi_3 = (1,1,1)'$ are always right eigenvectors of $R$ with respect to the eigenvalues $r_{11}$ and $r_{33}$ respectively. Clearly,  $ \bC_n \bxi_3/ (n+1) = 1$ for all $n$. Also, since $ \bC_n \bxi_1 = \bC_{n1}$, its limiting behavior is given in Corollary~\ref{cor: three}~(ii).

Now observe that if $r_{11} \neq r_{22}$, then $R$ has a right eigenvector $\bxi_2$ with respect to the eigenvalue $r_{22}$, given by $\bxi_2 = (r_{12}, (r_{22} - r_{11}), 0)'$. If $r_{22} > r_{11}$, then, from Corollary~\ref{cor: three}~(ii)~and~(iii), we have
$$ \bC_n \bxi_2 / n^{r_{22}} \to (r_{22} - r_{11}) V_2\ \text{almost surely as well as in} \ L^2,$$
since the contribution of $\bC_{n1}$ is of smaller order.

If $r_{22} < r_{11}$ and $r_{12} = 0$, then, observe that $\bC_n \bxi_2 = (r_{22}-r_{11}) \bC_{n2}$. If we further have $r_{22} > 0$, then from Corollary~\ref{cor: three}~(v), we get
$$\bC_n \bxi_2 / n^{r_{22}} \to (r_{22} - r_{11}) V_3 \ \text{almost surely as well as in} \ L^2.$$ But, if we have $r_{12} = r_{22} = 0$, then $\bC_n \bxi_2$ remains constant at $\bC_0 \bxi_2$.

If $0 = r_{22} < r_{11}$ and $r_{12} > 0$, then observe that $\bxi_2$ being an eigenvector of $R$ with respect to the eigenvalue $r_{22} = 0$, $R \bxi_2$ becomes a null vector. Also, if, for $j=1, 2, 3$, the $j$-th color appears in the $n$-th draw, $\bC_n \bxi_2$ increases by an amount which is the $j$-th coordinate of $R \bxi_2$, namely $0$. Thus $\bC_n \bxi_2$ remains constant at $\bC_0 \bxi_2$.

The situation becomes interesting when $0 < r_{22} < r_{11}$ and $r_{12} > 0$. Note that in this case, we have, from Corollary~\ref{cor: three}~(ii)~and~(v), that $\bC_n \bxi_2 / n^{r_{11}} \to 0$ almost surely, as well as in $L^2$. We summarize the asymptotic behavior of $\bC_n \bxi_2$ in this case in the following proposition.
\begin{prop}
Suppose we have a three color urn model with triangular replacement matrix $R$ given by~\eqref{eq: matrix three} with non-negative entries and each row sum one. Assume that $0 < r_{22} < r_{11}$ and $r_{12} > 0$. Let $V_1$ be the almost sure limit of $\bC_{n1} / n^{r_{11}}$ obtained in Corollary~\ref{cor: three}~(ii). Then the following hold:
\begin{enumerate}
\item If $r_{22} < r_{11}/2$, then $\bC_n \bxi_2 / \sqrt{n^{r_{11}}} \Rightarrow N \left( 0, \frac{r_{12} r_{22}^2 (r_{12} + r_{11} - r_{22})}{r_{11} - 2 r_{22}} V_1 \right)$.

\item If $r_{22} = r_{11}/2$, then $\bC_n \bxi_2 / \sqrt{n^{r_{11}} \log n} \Rightarrow N \left( 0, r_{12} r_{22}^2 (r_{12} + r_{11} - r_{22}) V_1 \right)$.

\item If $r_{22} > r_{11}/2$, then $\bC_n \bxi_2 / n^{r_{22}}$ converges almost surely and in $L^2$ to a nondegenerate random variable.
\end{enumerate}
\end{prop}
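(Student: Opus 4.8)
The plan is to analyze the martingale naturally associated with the linear combination $\bC_n \bxi_2$, exactly as in the proof of Theorem~\ref{thm: main}, but now tracking the $L^2$ behavior precisely rather than merely bounding it. Write $W_N = \bC_N \bxi_2 / \Pi_N(r_{22})$; since $\bxi_2$ is a right eigenvector of $R$ for the eigenvalue $r_{22}$, the quantity $\bchi_{N+1} R \bxi_2 = r_{22} \bchi_{N+1}\bxi_2$, so a short computation gives that $W_N$ is a martingale with difference
\begin{equation*}
W_{N+1}-W_N = \frac{r_{22}}{\Pi_{N+1}(r_{22})}\left(\bchi_{N+1} - \frac{\bC_N}{N+1}\right)\bxi_2 .
\end{equation*}
The conditional second moment is then
\begin{equation*}
E[(W_{N+1}-W_N)^2 \mid \mathcal F_N] = \frac{r_{22}^2}{\Pi_{N+1}(r_{22})^2}\left[\frac{\bC_N \bxi_2^2}{N+1} - \left(\frac{\bC_N\bxi_2}{N+1}\right)^2\right],
\end{equation*}
where $\bxi_2^2$ has coordinates $(r_{12}^2,(r_{22}-r_{11})^2,0)$. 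The key input is that by Corollary~\ref{cor: three}, $\bC_{N1}/N^{r_{11}}\to V_1$ and $\bC_{N2}/N^{r_{11}}\to r_{12}V_1/(r_{11}-r_{22})$ a.s. and in $L^2$ (since $r_{22}<r_{11}$, $r_{12}>0$ is the relevant case), while $\bC_{N3}=N+1$; hence $\bC_N\bxi_2/N^{r_{11}}\to 0$ and, more usefully,
\begin{equation*}
\frac{\bC_N\bxi_2^2}{N^{r_{11}}} \to r_{12}^2 V_1 + (r_{22}-r_{11})^2\cdot \frac{r_{12}V_1}{r_{11}-r_{22}} = r_{12}V_1\bigl(r_{12}+r_{11}-r_{22}\bigr) =: \sigma^2
\end{equation*}
a.s. and in $L^1$. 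Using $\Pi_{N+1}(r_{22})\sim N^{r_{22}}/\Gamma(r_{22}+1)$, the subtracted square term is of strictly smaller order ($\bC_N\bxi_2 = o(N^{r_{11}})$, so its square over $N^2$ is $o(N^{2r_{11}-2})$, negligible against $N^{r_{11}-1-2r_{22}}$ exactly when $r_{11}>0$). Therefore
\begin{equation*}
E[(W_{N+1}-W_N)^2 \mid \mathcal F_N] \sim \frac{r_{22}^2\,\Gamma(r_{22}+1)^2}{N^{2r_{22}}}\cdot\frac{\sigma^2 N^{r_{11}}}{N} = r_{22}^2\,\Gamma(r_{22}+1)^2\,\sigma^2\, N^{r_{11}-2r_{22}-1}.
\end{equation*}

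From here the three regimes separate according to whether $\sum_N N^{r_{11}-2r_{22}-1}$ converges. In case (iii), $r_{22}>r_{11}/2$, the exponent $r_{11}-2r_{22}-1<-1$, so $\sum_N E[(W_{N+1}-W_N)^2]<\infty$; $W_N$ is $L^2$-bounded, converges a.s. and in $L^2$, and since $\Pi_N(r_{22})\sim N^{r_{22}}/\Gamma(r_{22}+1)$ we get $\bC_n\bxi_2/n^{r_{22}}$ converging a.s. and in $L^2$ — nondegeneracy follows as in Theorem~\ref{thm: main} (the initial martingale value $W_1$ is nondegenerate because $\bxi_2$ does not have all coordinates equal, its eigenvalue being $r_{22}<1$). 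For cases (i) and (ii), $W_N$ is no longer $L^2$-bounded, and the route is a martingale central limit theorem: writing $s_N^2 = \sum_{k\le N} E[(W_{k+1}-W_k)^2\mid\mathcal F_k]$, one shows $s_N^2$ grows like $N^{r_{11}-2r_{22}}$ (case (i)) or $\log N$ (case (ii)) times an explicit constant involving $\sigma^2$, verifies the conditional Lindeberg condition (the jumps $|W_{N+1}-W_N|$ are $O(N^{-r_{22}})$, hence uniformly negligible relative to $s_N$), and applies the martingale CLT to conclude $W_N/s_N \Rightarrow N(0,1)$, equivalently $\bC_n\bxi_2/\sqrt{n^{r_{11}}}$ (resp. $\sqrt{n^{r_{11}}\log n}$) converges in distribution to the stated mixed normal — "mixed" because $s_N^2$ has the random limit proportional to $V_1$, so one uses the stable/mixing version of the martingale CLT. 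Collecting the explicit constants: in case (i) one divides by $\sqrt{n^{r_{11}}}$ and the variance of the limit works out to $r_{12}r_{22}^2(r_{12}+r_{11}-r_{22})V_1/(r_{11}-2r_{22})$ after summing $\sum N^{r_{11}-2r_{22}-1}\sim N^{r_{11}-2r_{22}}/(r_{11}-2r_{22})$ and cancelling the $\Gamma$-factors against $\Pi_N(r_{22})^2$; in case (ii) the sum is $\sim\log N$ with no extra constant, giving variance $r_{12}r_{22}^2(r_{12}+r_{11}-r_{22})V_1$.

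The main obstacle is the rigorous passage from the $L^2$-bound to the stable central limit theorem in cases (i) and (ii): one must (a) upgrade the a.s./$L^1$ convergence $\bC_N\bxi_2^2/N^{r_{11}}\to\sigma^2$ into control of $s_N^2$ with the correct random multiplicative constant (this needs a Toeplitz/Cesàro-type averaging argument over $n\le N$, since $s_N^2$ is a weighted sum of the $\bC_n\bxi_2^2$), and (b) invoke a version of the martingale CLT that accommodates a random limiting variance $\propto V_1$ and a deterministic normalizer — the Hall–Heyde stable CLT, for which one checks $s_N^{-2}\sum_{k\le N}E[(W_{k+1}-W_k)^2\mid\mathcal F_k]$ converges to a positive random variable and the nested Lindeberg condition holds. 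The bookkeeping of constants (tracking $\Gamma(r_{22}+1)$ through $\Pi_N$ and having it cancel) is routine but must be done carefully to land the stated variances; I would present it compactly and leave the $\epsilon$-level details of the Lindeberg check to the reader, as the paper does elsewhere.
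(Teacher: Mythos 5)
Your proposal is correct and is essentially the argument the paper itself relies on: the paper's proof just invokes Theorem~3.1 of \cite{bose:dasgupta:maulik:2008}, whose method is exactly your route --- the martingale $\bC_N\bxi_2/\Pi_N(r_{22})$, conditional variance asymptotics fed by the strong laws of Corollary~\ref{cor: three} (so that $\bC_N\bxi_2^2/N^{r_{11}}\to\bpi\bxi_2^2\,V_1=r_{12}(r_{12}+r_{11}-r_{22})V_1$), a stable martingale CLT when $r_{22}\le r_{11}/2$ and $L^2$-boundedness plus the initial-variance argument when $r_{22}>r_{11}/2$ --- and your limiting variances agree with the paper's $r_{22}^2V_1\bpi\bxi_2^2/(r_{11}-2r_{22})$ and $r_{22}^2V_1\bpi\bxi_2^2$. (One cosmetic slip: negligibility of the subtracted square term requires $r_{11}<1$ rather than $r_{11}>0$, which holds automatically here since $r_{12}>0$ forces $r_{11}<1$.)
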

Note that here $V_1$ is a random variable and the above limits are to be interpreted as variance mixtures of normal distributions.
\begin{proof}
The proof is same as that of Theorem~3.1 of~\cite{bose:dasgupta:maulik:2008}. The limiting variance in~(i) above will be $r_{22}^2 V_1 \bpi \bxi_2^2 / (r_{11} - 2 r_{22})$, where $\bxi_2^2$ is a column vector with coordinates which are squares of those of $\bxi_2$ and $\bpi = (1, r_{12} / (r_{11} - r_{22}), 0)$ is a left
eigenvector of $R$ corresponding to $r_{11}$. The limiting variance in~(ii) above will be $r_{22}^2 V_1 \bpi \bxi_2^2$. A simplification in either case gives the result.
\end{proof}

However, if $r_{12}>0$ and $r_{11} = r_{22}$, then for this repeated eigenvalue, it can be checked that for any $\alpha$, $\bxi_2=(\alpha, \ 1/r_{12},\ 0)^\prime$ is a Jordan vector satisfying $R\bxi_2=\bxi_1+r_{11}\bxi_2$. Hence, from Corollary~\ref{cor: three}~(ii) and~(iv), we have
$$\bC_n \bxi_2 / (n^{r_{22}}\log n) \to V_1\ \text{almost surely as well as in} \ L^2,$$
where $V_1$ is the almost sure limit of $\bC_{n1} / n^{r_{11}}$ obtained in Corollary~\ref{cor: three}~(ii).
\end{section}


\end{document}